\date{}
\newcommand{\R}{{\mathbb R}}
\newcommand{\N}{{\mathbb N}}
\newcommand{\const}{{\rm const}}
\newtheorem{theorem}{Theorem}
\newtheorem{corollary}{Corollary}
\newtheorem{lemma}{Lemma}
\theoremstyle{definition}
\newtheorem{remark}{Remark}
\title{On solutions of an ill-posed Stefan problem}
\author{Evgeny Yu. Panov \\ Yaroslav-the-Wise Novgorod State University, \\ Veliky Novgorod, Russian Federation}
\begin{document}
\maketitle

\begin{abstract}
We study multi-phase Stefan problem with increasing Riemann initial data and with generally negative latent specific heats for the phase transitions. We propose the variational formulation of self-similar solutions, which allows to find precise conditions for existence and uniqueness of the solution.
Bibliography$:$
$2$
titles.
\end{abstract}

\section{Introduction}

In a half-plane $\Pi=\{ \ (t,x) \ | \ t>0, x\in\R \ \}$ we consider the multi-phase Stefan problem for the heat equation
\begin{equation}\label{1}
u_t=a_i^2u_{xx}, \quad u_i<u<u_{i+1},
\end{equation}
where $u_-=u_0<u_1<\cdots<u_n<u_{n+1}=u_+$, $u_i$, $i=1,\dots,n$ being the temperatures of phase transitions, $a_i>0$,
$i=0,\dots,n$, are the diffusivity constants. On the unknown lines $x=x_i(t)$ of phase transitions where $u=u_i$ the following Stefan condition
\begin{equation}\label{St}
d_ix_i'(t)+k_iu_x(t,x_i(t)+)-k_{i-1}u_x(t,x_i(t)-)=0
\end{equation}
is postulated, where $k_i>0$ are the thermal conductivity of the $i$-th phase, while $d_i$ is the Stefan number (the latent specific heat) for the $i$-th phase transition. In the case when the temperature $u(t,x)$ non-decreases with respect to the spatial variable $x$ the Stefan numbers $d_i$ should be nonnegative by physical reasons. In this case the problem (\ref{1}), (\ref{St}) is well-posed and reduces to a degenerate nonlinear diffusion  equation,  see \cite[Chapter 5]{LSU}. In the present paper we consider the case of arbitrary $d_i$ when the problem is generally ill-posed. This was firstly demonstrated in \cite{Pukh} for a one-phase Stefan problem with a negative Stefan number. Our aim is to find general conditions which guarantees the correctness of our problem.
We will study the Cauchy problem with the Riemann initial data
\begin{equation}\label{2}
u(0,x)=\left\{\begin{array}{lr} u_-, & x<0, \\ u_+, & x>0. \end{array}\right.
\end{equation}
By the invariance of our problem under the transformation group
$(t,x)\to (\lambda^2 t, \lambda x)$, $\lambda\in\R$, $\lambda\not=0$, it is natural to seek a self-similar solution of problem (\ref{1}), (\ref{St}), (\ref{2}), which has the form $u(t,x)=v(\xi)$, $\xi=x/\sqrt{t}$. For the heat equation
$u_t=a^2 u_{xx}$ a self-similar solution must satisfy the linear ODE $a^2v''=-\xi v'/2$, the general solution of which is
$$
v=C_1F(\xi/a)+C_2, \ C_1,C_2=\const, \mbox{ where } F(\xi)=\frac{1}{2\sqrt{\pi}}\int_{-\infty}^\xi e^{-s^2/4}ds.
$$
This allows to write our solution in the form
\begin{align}\label{3}
v(\xi)=u_i+\frac{u_{i+1}-u_i}{F(\xi_{i+1}/a_i)-F(\xi_i/a_i)}(F(\xi/a_i)-F(\xi_i/a_i)), \\ \nonumber
\xi_i<\xi<\xi_{i+1}, \ i=0,\ldots,n,
\end{align}
where $-\infty=\xi_0<\xi_1<\cdots<\xi_n<\xi_{n+1}=+\infty$ and we agree that $F(-\infty)=0$, $F(+\infty)=1$.
The parabolas $\xi=\xi_i$, $i=1,\ldots,n$, where $u=u_i$, are free boundaries. They must be determined by conditions (\ref{St}).
In the variable $\xi$ these conditions have the form
\begin{equation}\label{4}
d_i\xi_i/2+\frac{k_i(u_{i+1}-u_i)F'(\xi_i/a_i)}{a_i(F(\xi_{i+1}/a_i)-F(\xi_i/a_i))}-
\frac{k_{i-1}(u_i-u_{i-1})F'(\xi_i/a_{i-1})}{a_{i-1}(F(\xi_i/a_{i-1})-F(\xi_{i-1}/a_{i-1}))}=0, \ i=1,\ldots,n.
\end{equation}
To investigate this nonlinear system, we notice that (\ref{4}) coincides with the condition
$\nabla E(\bar\xi)=0$, where the function
\begin{align}\label{5}
E(\bar\xi)=-\sum_{i=0}^n  k_i(u_{i+1}-u_i)\ln (F(\xi_{i+1}/a_i)-F(\xi_i/a_i))+\sum_{i=1}^n d_i\xi_i^2/4, \\ \nonumber \bar\xi=(\xi_1,\ldots,\xi_n)\in\Omega,
\end{align}
the open convex domain $\Omega\subset\R^n$ is given by the inequalities $\xi_1<\cdots<\xi_n$.
Observe that $E(\bar\xi)\in C^\infty(\Omega)$.

\section{Coercivity of $E$}

Let us introduce the sub-level sets $$\Omega_c=\{ \ \bar\xi\in\Omega \ | \ E(\bar\xi)\le c \ \}, \quad c\in\R.$$
We are going to prove that under some exact conditions on the parameters these sets are compact. This means the coercivity of function $E(\bar\xi)$. First, we investigate when the sets $\Omega_c$ are bounded.

\begin{lemma}\label{lem1}
The sets $\Omega_c$ are bounded if and only if the following two conditions are satisfied
\begin{align}\label{coer1}
S^j\doteq\sum_{i=1}^j [\kappa_{i-1}(u_i-u_{i-1})+d_i]\ge 0, \ j=1,\ldots,n; \\
\label{coer2}
S_j\doteq\sum_{i=j}^n [\kappa_i(u_{i+1}-u_i)+d_i]\ge 0, \ j=1,\ldots,n,
\end{align}
where $\kappa_i=k_i/a_i^2$.
\end{lemma}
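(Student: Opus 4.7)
The plan is to study the behavior of $E(\bar\xi)$ as $\bar\xi$ escapes to infinity in $\Omega$, and to match the two conditions of the lemma to the coefficients of the leading quadratic growth of $E$ along the ``ends'' of $\Omega$. The analytic tools I would use are the standard asymptotics $-\ln F(\xi)=\xi^2/4+\ln|\xi|+O(1)$ as $\xi\to-\infty$ and $-\ln(1-F(\xi))=\xi^2/4+\ln\xi+O(1)$ as $\xi\to+\infty$, combined with the elementary bounds $F(\xi_{i+1}/a_i)-F(\xi_i/a_i)\le\min\{F(\xi_{i+1}/a_i),\,1-F(\xi_i/a_i)\}$. These yield, for a universal constant $C$ and $\phi_i:=-\ln(F(\xi_{i+1}/a_i)-F(\xi_i/a_i))$, the one-sided lower bounds $\phi_i\ge\xi_{i+1}^2/(4a_i^2)+\ln|\xi_{i+1}|-C$ whenever $\xi_{i+1}\le -1$, and $\phi_i\ge\xi_i^2/(4a_i^2)+\ln\xi_i-C$ whenever $\xi_i\ge 1$.

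For necessity I would probe $E$ along an explicit ray in $\Omega$: fix $j\in\{1,\ldots,n\}$, set $\xi_i=-t+i\varepsilon$ for $i\le j$, and take $\xi_{j+1},\ldots,\xi_n$ at arbitrary distinct values greater than $-t+j\varepsilon$. A term-by-term computation using the asymptotics above gives $E(\bar\xi)=(S^j/4)\,t^2+O(t)$ as $t\to+\infty$. If $S^j<0$ then $E\to-\infty$ along this family, so $\Omega_c$ is unbounded for every sufficiently large $c$, a contradiction; hence $S^j\ge 0$. Symmetrically, sending $\xi_l,\ldots,\xi_n$ to $+\infty$ at a common rate forces $S_l\ge 0$.

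For sufficiency I would argue by contradiction. Suppose $\Omega_c$ is unbounded and pick $\bar\xi^{(m)}\in\Omega_c$ with $\|\bar\xi^{(m)}\|\to\infty$. Passing to a subsequence, each $\xi_i^{(m)}$ converges in $[-\infty,+\infty]$; by the strict ordering, there exist $0\le j<l\le n+1$ with $j\ge 1$ or $l\le n$ such that $\xi_i^{(m)}\to-\infty$ for $i\le j$, $\xi_i^{(m)}\to+\infty$ for $i\ge l$, and $\xi_i^{(m)}$ has a finite limit for $j<i<l$. The middle $\phi_i$'s are bounded below (they stay finite or diverge to $+\infty$, which only helps), so the lower bounds of the first step applied to the ``left'' indices $i=0,\ldots,j-1$ and to the ``right'' indices $i=l,\ldots,n$ give, after re-indexing,
\[
E(\bar\xi^{(m)})\ge\frac{1}{4}\sum_{i=1}^{j}\sigma_i(\xi_i^{(m)})^2+\frac{1}{4}\sum_{i=l}^{n}\tau_i(\xi_i^{(m)})^2+L^{(m)}-C',
\]
where $\sigma_i=\kappa_{i-1}(u_i-u_{i-1})+d_i$, $\tau_i=\kappa_i(u_{i+1}-u_i)+d_i$, and $L^{(m)}$ is a positive-coefficient combination of $\ln|\xi_i^{(m)}|$ ($i\le j$) and $\ln\xi_i^{(m)}$ ($i\ge l$) in which at least one summand tends to $+\infty$. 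The decisive step is Abel summation: since $\xi_1^{(m)}<\cdots<\xi_j^{(m)}<0$ eventually, the numbers $(\xi_i^{(m)})^2$ are strictly decreasing in $i$, so
\[
\sum_{i=1}^{j}\sigma_i(\xi_i^{(m)})^2=S^j(\xi_j^{(m)})^2+\sum_{i=1}^{j-1}S^i\bigl[(\xi_i^{(m)})^2-(\xi_{i+1}^{(m)})^2\bigr]\ge 0,
\]
and analogously $\sum_{i=l}^{n}\tau_i(\xi_i^{(m)})^2\ge 0$, by \eqref{coer1} and \eqref{coer2}. Therefore $E(\bar\xi^{(m)})\ge L^{(m)}-C'\to+\infty$, contradicting $E(\bar\xi^{(m)})\le c$. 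The main obstacle is this sufficiency step: obtaining the above estimate uniformly along the subsequence in spite of the many possible relative rates of divergence of the $\xi_i^{(m)}$. Once that lower bound is in place, Abel summation is the clean algebraic step that converts non-negativity of every partial sum $S^i$, $S_i$ into non-negativity of the full weighted quadratic sum, and the logarithmic remainder takes care of the borderline case where the quadratic part vanishes.
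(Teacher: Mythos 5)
Your proposal is correct and follows essentially the same route as the paper: the same Gaussian-tail asymptotics for $F$ near $\pm\infty$, the same lower bound obtained by dropping the middle terms and using $F(\xi_{i+1}/a_i)-F(\xi_i/a_i)\le\min\{F(\xi_{i+1}/a_i),1-F(\xi_i/a_i)\}$, the same Abel (summation by parts) step converting nonnegativity of the partial sums $S^j,S_j$ into nonnegativity of the weighted quadratic sums, and the same equally spaced shifted block for the necessity direction. The logarithmic term playing the role of the paper's $q_m\to+\infty$ handles the borderline case exactly as in the paper, so no gaps remain.
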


\begin{proof}
Assume that conditions (\ref{coer1}), (\ref{coer2}) hold. We have to prove that the set $\Omega_c$ is bounded. Assuming the contrary, we can find the sequence $\bar\xi_m\in\Omega_c$, $m\in\N$, such that $|\bar\xi_m|\to +\infty$ as $m\to\infty$ (here and in the sequel we denote by $|v|$ the Euclidean norm of a finite-dimensional vector $v$). Passing to a subsequence if necessary, we can assume that the coordinates $\xi_{mi}$, $i=1,\ldots, n$, of the points $\bar\xi_m$ have finite or infinite limits in $[-\infty,+\infty]$. Let $l$ be the minimal $i$ such that
$\displaystyle\lim_{m\to\infty}\xi_{mi}=+\infty$. If such $i$ does not exist we set $l=n+1$. Similarly, we define $r$ as the maximal $i$ such that $\displaystyle\lim_{m\to\infty}\xi_{mi}=-\infty$. In the case when there is no such $i$ we set $r=0$. By the condition $|\bar\xi_m|\to +\infty$ we can claim that either $l\le n$ or $r\ge 1$. Since the coordinates
$\xi_{mi}$ increase with respect to $i$ then
$$
\lim_{m\to\infty}\xi_{mi}=+\infty \ \forall i, l\le i\le n; \quad \lim_{m\to\infty}\xi_{mi}=-\infty \ \forall i, 1\le i\le r
$$
and, in particular, $r<l$.
Now we observe that by L'H\^{o}pital's rule and the identities $F'(\xi)=\frac{1}{2\sqrt{\pi}}e^{-\xi^2/4}$, $F''(x)=-\xi F'(\xi)/2$,
\begin{align*}
\lim_{\xi\to+\infty}\frac{1-F(\xi)}{2F'(\xi)/\xi}=\lim_{\xi\to+\infty}\frac{F'(\xi)\xi^2}{2(F'(\xi)-\xi F''(\xi))}=\lim_{\xi\to\infty}\frac{\xi^2}{\xi^2+2}=1,\\
\lim_{\xi\to-\infty}\frac{-F(\xi)}{2F'(\xi)/\xi}=\lim_{\xi\to-\infty}\frac{F'(\xi)\xi^2}{2(F'(\xi)-\xi F''(\xi))}=\lim_{\xi\to\infty}\frac{\xi^2}{\xi^2+2}=1.
\end{align*}
By these relations
\begin{equation}\label{ass}
F(-\xi)=1-F(\xi)=\frac{1}{\sqrt{\pi}|\xi|}e^{-\xi^2/4}(1+o(1)) \ \mbox{ as } \xi\to+\infty. \\
\end{equation}
Since
$$
-\ln (F(\xi_{m(i+1)}/a_i)-F(\xi_{mi}/a_i))\ge \max(-\ln (1-F(\xi_{mi}/a_i)),-\ln F(\xi_{m(i+1)}/a_i))\ge 0,
$$
we have
\begin{align}\label{6}
E(\bar\xi_m)=\sum_{i=0}^{r-1} [-k_i(u_{i+1}-u_i)\ln (F(\xi_{m(i+1)}/a_i)-F(\xi_{mi}/a_i))+d_{i+1}\xi_{m(i+1)}^2/4]+\nonumber\\
\sum_{i=l}^n[-k_i(u_{i+1}-u_i)\ln (F(\xi_{m(i+1)}/a_i)-F(\xi_{mi}/a_i))+d_i\xi_{mi}^2/4]-\nonumber\\
\sum_{i=r}^{l-1}k_i(u_{i+1}-u_i)\ln (F(\xi_{m(i+1)}/a_i)-F(\xi_{mi}/a_i))+\sum_{i=r+1}^{l-1}d_i\xi_{mi}^2/4\ge\nonumber\\
\sum_{i=0}^{r-1}[-k_i(u_{i+1}-u_i)\ln F(\xi_{m(i+1)}/a_i)+ d_{i+1}\xi_{m(i+1)}^2/4]+\nonumber\\
\sum_{i=l}^n[-k_i(u_{i+1}-u_i)\ln (1-F(\xi_{mi}/a_i))+d_i\xi_{mi}^2/4]+p_m=\nonumber\\
\sum_{i=1}^r[-k_{i-1}(u_i-u_{i-1})\ln F(\xi_{mi}/a_{i-1})+ d_i\xi_{mi}^2/4]+\nonumber\\
\sum_{i=l}^n[-k_i(u_{i+1}-u_i)\ln (1-F(\xi_{mi}/a_i))+d_i\xi_{mi}^2/4]+p_m,
\end{align}
where
$$
p_m=\sum_{i=r+1}^{l-1}d_i\xi_{mi}^2/4
$$
is a bounded sequence, because the sequences $\xi_{mi}$ have finite limits as $m\to\infty$ for all $i=r+1,\ldots l-1$. Remark that the set of such $i$ may be empty, in this case $p_m=0$. In view of (\ref{ass}), it follows from (\ref{6}) that
\begin{align}\label{7}
c\ge E(\bar\xi_m)\ge \sum_{i=1}^r[\kappa_{i-1}(u_i-u_{i-1})+d_i]\xi_{mi}^2/4+\sum_{i=l}^n [\kappa_i(u_{i+1}-u_i)+d_i]\xi_{mi}^2/4+q_m,
\end{align}
where, up to a vanishing sequence,
$$
q_m=\sum_{i=1}^r k_{i-1}(u_i-u_{i-1}) \ln(\sqrt{\pi}|\xi_{mi}|/a_{i-1})+\sum_{i=l}^n k_i(u_{i+1}-u_i) \ln(\sqrt{\pi}|\xi_{mi}|/a_i)+p_m.
$$
In particular, the sequence $q_m\to+\infty$ as $m\to\infty$ and it follows from (\ref{7}) that
\begin{equation}\label{8}
\sum_{i=1}^r[\kappa_{i-1}(u_{i}-u_{i-1})+d_i]\xi_{mi}^2/4+\sum_{i=l}^n [\kappa_i(u_{i+1}-u_i)+d_i]\xi_{mi}^2/4\le c-q_m<0
\end{equation}
for all sufficiently large $m\in\N$. On the other hand, $\kappa_{i-1}(u_{i}-u_{i-1})+d_i=S^i-S^{i-1}$, $\kappa_i(u_{i+1}-u_i)+d_i=S_i-S_{i+1}$ and applying the summation by parts formula, we find that
\begin{align*}
\sum_{i=1}^r[\kappa_{i-1}(u_{i}-u_{i-1})+d_i]\xi_{mi}^2/4=S^r\xi_{mr}^2/4-\sum_{i=1}^{r-1} S^i(\xi_{m(i+1)}^2-\xi_{mi}^2)/4\ge\\
-\sum_{i=1}^{r-1} S^i(\xi_{m(i+1)}-\xi_{mi})(\xi_{m(i+1)}+\xi_{mi})/4
\ge 0,\\
\sum_{i=l}^n [\kappa_i(u_{i+1}-u_i)+d_i]\xi_{mi}^2/4=S_l\xi_{ml}^2/4+\sum_{i=l+1}^n S_i(\xi_{mi}^2-\xi_{m(i-1)}^2)/4\ge \\
\sum_{i=l+1}^n S_i(\xi_{mi}-\xi_{m(i-1)})(\xi_{mi}+\xi_{m(i-1)})/4\ge 0
\end{align*}
for sufficiently large $m$ because $S_i,S^i\ge 0$ by assumptions (\ref{coer1}), (\ref{coer2}) while $\xi_{mi}<0$  for $i=1,\ldots,r$, $\xi_{mi}>0$ for $i=l,\ldots,n$, if $m$ is sufficiently large. But this contradicts to (\ref{8}). Hence, the set $\Omega_c$ is bounded for every real $c$.

Conversely, assume that at least one of conditions (\ref{coer1}), (\ref{coer2}) fails. For definiteness we suppose that
(\ref{coer1}) fails. This means that for some $1\le r\le n$
\begin{equation}\label{10}
S^r=\sum_{i=1}^r [\kappa_{i-1}(u_i-u_{i-1})+d_i]<0.
\end{equation}
For $\sigma>0$ we define the point $\bar\xi(\sigma)\in\Omega$ with coordinates $\xi_i=-\sigma+i-r$, $i=1,\ldots,r$; $\xi_i=i-r$, $i=r+1,\ldots,n$. Since for $i=1,\ldots,r-1$
$$
F(\xi_{i+1}/a_i)-F(\xi_i/a_i)=\frac{1}{2\sqrt{\pi}}\int_{\xi_i/a_i}^{\xi_{i+1}/a_i}e^{-s^2/4}ds\ge \frac{\xi_{i+1}-\xi_i}{2\sqrt{\pi}a_i}e^{-\xi_i^2/4a_i^2}=\frac{1}{2\sqrt{\pi}a_i}e^{-(\sigma+r-i)^2/4a_i^2},
$$
and, in view of (\ref{ass}),
$$
F(\xi_1/a_0)=\frac{a_0}{\sqrt{\pi}|\sigma+r-1|}e^{-(\sigma+r-1)^2/4a_0^2}(1+o(1))\ \mbox{ as } \sigma\to +\infty
$$
then for some constant $C$
\begin{align}\label{11}
E(\bar\xi(\sigma))\le -\sum_{i=0}^{r-1}k_i(u_{i+1}-u_i)\ln(F(\xi_{i+1}/a_i)-F(\xi_i/a_i))+\nonumber\\
\sum_{i=1}^r d_i\xi_i^2/4+C\le
\sum_{i=1}^r [\kappa_{i-1}(u_i-u_{i-1})+d_i]\sigma^2/4+O(\sigma).
\end{align}
By (\ref{10}) it follows from (\ref{11}) that
$$
E(\bar\xi(\sigma))\to -\infty \ \mbox{ as } \sigma\to+\infty.
$$
Therefore, $\bar\xi(\sigma)\in\Omega_c$ for all sufficiently large $\sigma$ and the set $\Omega_c$ is unbounded.
Thus, if the set $\Omega_c$ is bounded (for some $c$) then both conditions (\ref{coer1}), (\ref{coer2}) are satisfied.
This completes the proof.
\end{proof}

Now, we are going to demonstrate that these conditions are necessary and sufficient for coercivity of the function $E$.

\begin{theorem}\label{th1}
The sets $\Omega_c$ are compact for each $c\in\R$ if and only if conditions (\ref{coer1}), (\ref{coer2}) are satisfied.
In particular, under these conditions the function $E(\bar\xi)$ reaches its minimal value.
\end{theorem}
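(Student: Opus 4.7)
The plan is to reduce the compactness assertion to the combination of boundedness (already provided by Lemma \ref{lem1}) and closedness of $\Omega_c$ as a subset of $\R^n$. Since $E$ is continuous on the open domain $\Omega$, the set $\Omega_c$ is automatically relatively closed in $\Omega$; what remains is to show that no sequence in $\Omega_c$ escapes to the boundary $\partial\Omega\subset\R^n$, which consists of points with $\xi_1\le\cdots\le\xi_n$ having at least one equality $\xi_j=\xi_{j+1}$.

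The main step is therefore the following boundary behavior: if $\bar\xi_m\in\Omega$ converges in $\R^n$ to a boundary point $\bar\xi^*$ with $\xi^*_j=\xi^*_{j+1}$, then $E(\bar\xi_m)\to+\infty$. This is straightforward because each summand $-k_i(u_{i+1}-u_i)\ln(F(\xi_{m(i+1)}/a_i)-F(\xi_{mi}/a_i))$ is nonnegative (the argument of the logarithm lies in $(0,1]$), the quadratic terms $d_i\xi_{mi}^2/4$ are bounded on a convergent sequence, and the $j$-th summand satisfies $F(\xi_{m(j+1)}/a_j)-F(\xi_{mj}/a_j)\to 0$, forcing $-\ln(\cdot)\to+\infty$ with the positive coefficient $k_j(u_{j+1}-u_j)$. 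Thus $E(\bar\xi_m)\to+\infty$, which is incompatible with $\bar\xi_m\in\Omega_c$; this proves $\Omega_c$ is closed in $\R^n$.

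For the ``only if'' direction, assume that $\Omega_c$ is compact for every $c\in\R$; then in particular $\Omega_c$ is bounded, so by Lemma \ref{lem1} both conditions (\ref{coer1}) and (\ref{coer2}) hold. Combining with the implication just proved, this yields the stated equivalence. The last assertion follows from a standard compactness argument: $\Omega$ is nonempty, so picking any $\bar\xi_0\in\Omega$ and setting $c_0=E(\bar\xi_0)$, the set $\Omega_{c_0}$ is a nonempty compact subset of $\Omega$, on which the continuous function $E$ attains its minimum; this value is automatically the global minimum on $\Omega$, since any point where $E$ is smaller would belong to $\Omega_{c_0}$.

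The only nontrivial obstacle is the boundary blow-up of $E$, but it is essentially elementary, relying on the fact that the ``entropy-like'' logarithmic terms dominate any bounded perturbation coming from $d_i\xi_i^2/4$ whenever a phase interval collapses. Thus the proof should be brief, consisting mostly of bookkeeping to separate the collapsing summand from the rest.
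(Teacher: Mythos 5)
Your proof is correct and follows essentially the paper's route: boundedness comes from Lemma~\ref{lem1}, and compactness then hinges on the fact that the nonnegative logarithmic summands blow up when a gap $\xi_{i+1}-\xi_i$ collapses while the quadratic terms $d_i\xi_i^2/4$ stay bounded on the bounded set, so $\Omega_c$ cannot reach $\partial\Omega$. The only difference is cosmetic: the paper makes this quantitative (each logarithmic term is bounded by $c_1$, yielding a uniform gap $\xi_{i+1}-\xi_i\ge\delta_1$ and inclusion of $\Omega_c$ in an explicit compact $K\subset\Omega$), whereas you phrase it as a sequential boundary blow-up argument showing $\Omega_c$ is closed in $\R^n$.
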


\begin{proof}
If the set $\Omega_c$ is compact for some $c\in\R$ then it is bounded and by Lemma~\ref{lem1} conditions (\ref{coer1}), (\ref{coer2}) are satisfied.

Conversely, if conditions (\ref{coer1}), (\ref{coer2}) hold then the set $\Omega_c$ is bounded (for every fixed $c$). Therefore, there exists $R>0$ such that $|\bar\xi|\le R$ for all $\bar\xi\in\Omega_c$. This implies the estimate
\begin{align*}
-\sum_{i=0}^n  k_i(u_{i+1}-u_i)\ln (F(\xi_{i+1}/a_i)-F(\xi_i/a_i))\le E(\bar\xi)-\sum_{i=1}^n d_i\xi_i^2/4\le\\
c_1\doteq c+\frac{R^2}{4}\sum_{i=1}^n \max(-d_i,0).
\end{align*}
Since $-k_i(u_{i+1}-u_i)\ln (F(\xi_{i+1}/a_i)-F(\xi_i/a_i))>0$, we claim that for all $i=0,\ldots,n$
$$
-k_i(u_{i+1}-u_i)\ln (F(\xi_{i+1}/a_i)-F(\xi_i/a_i))\le c_1.
$$
It follows that
\begin{equation}\label{12}
F(\xi_{i+1}/a_i)-F(\xi_i/a_i)\ge \delta\doteq\exp(-c_1/\alpha)>0,
\end{equation}
where $\displaystyle\alpha=\min_{i=0,\ldots,n}k_i(u_{i+1}-u_i)>0$. Since $F'(\xi)=\frac{1}{2\sqrt{\pi}}e^{-\xi^2/4}<1$, the function $F(\xi)$ is Lipschitz with constant $1$, and it follows from (\ref{12}) that
$$
(\xi_{i+1}-\xi_i)/a_i\ge F(\xi_{i+1}/a_i)-F(\xi_i/a_i)\ge \delta, \quad i=1,\ldots,n-1,
$$
and we obtain the estimates $\xi_{i+1}-\xi_i\ge\delta_1=\delta\min a_i$. Thus, the set $\Omega_c$ is contained in a compact
$$
K=\{ \ \bar\xi=(\xi_1,\ldots,\xi_n)\in\R^n \ | \ |\bar\xi|\le R, \ \xi_{i+1}-\xi_i\ge\delta_1 \ \forall i=1 ,\ldots,n-1 \ \}.
$$
Since $E(\bar\xi)$ is continuous on $K$, the set $\Omega_c$ where $E(\bar\xi)\le c$ is a closed subset of $K$ and therefore is compact. For $c>N\doteq\inf E(\bar\xi)$, this set is not empty and the entropy $E(\bar\xi)$ reaches on it a minimal value, which is evidently equal $N$.
\end{proof}

We have established the existence of minimal value $E(\bar\xi_0)=\min E(\bar\xi)$. At the point $\bar\xi_0$ the required condition $\nabla E(\bar\xi_0)=0$ is satisfied, and $\bar\xi_0$ is a solution of system (\ref{4}). The coordinates
of $\bar\xi_0$ determine the solution (\ref{3}) of our Stefan-Riemann problem. Thus, we establish the following existence result.

\begin{theorem}\label{th2}
Under conditions (\ref{coer1}), (\ref{coer2}) there exists a self-similar increasing solution of problem (\ref{1}), (\ref{St}), (\ref{2}).
\end{theorem}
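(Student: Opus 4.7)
The strategy is a direct application of Theorem~\ref{th1}: since conditions (\ref{coer1}), (\ref{coer2}) force every sub-level set $\Omega_c$ to be compact in the open convex domain $\Omega$, the smooth function $E$ attains its infimum at some interior point $\bar\xi_0=(\xi_1^0,\ldots,\xi_n^0)\in\Omega$. Because $\Omega$ is open and $E\in C^\infty(\Omega)$, Fermat's rule gives $\nabla E(\bar\xi_0)=0$, which is precisely the nonlinear system (\ref{4}) introduced after equation (\ref{3}). Thus the numbers $\xi_1^0<\cdots<\xi_n^0$ satisfy all Stefan conditions in self-similar variables.

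I would then substitute these values into the explicit formula (\ref{3}) to define a candidate self-similar solution $u(t,x)=v(x/\sqrt{t})$. The verification splits into three routine checks: (i) on each interval $(\xi_i^0,\xi_{i+1}^0)$, $v$ is an affine function of $F(\xi/a_i)$ and therefore solves the reduced ODE $a_i^2 v''+\xi v'/2=0$, which is equivalent to the heat equation $u_t=a_i^2 u_{xx}$ in the $i$-th phase (\ref{1}); (ii) at each interface $\xi=\xi_i^0$ we have $v(\xi_i^0\pm)=u_i$ by construction, and the Stefan condition (\ref{St}) in the similarity variable is exactly the $i$-th component of $\nabla E(\bar\xi_0)=0$; (iii) the Riemann initial data (\ref{2}) are recovered via the limits $v(-\infty)=u_0=u_-$ and $v(+\infty)=u_{n+1}=u_+$ built into (\ref{3}).

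To see that $v$ is increasing (as claimed in the statement), note that on each open interval $(\xi_i^0,\xi_{i+1}^0)$ the coefficient $(u_{i+1}-u_i)/(F(\xi_{i+1}^0/a_i)-F(\xi_i^0/a_i))$ is strictly positive, since $u_i<u_{i+1}$ and $F$ is strictly increasing. Because $F'(\xi)=\frac{1}{2\sqrt{\pi}}e^{-\xi^2/4}>0$, $v$ is strictly increasing on every phase, and since it matches the value $u_i$ at each $\xi_i^0$ from both sides and $u_0<u_1<\cdots<u_{n+1}$, the global monotonicity follows.

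There is essentially no new obstacle after Theorem~\ref{th1}: all the analytic difficulty has been absorbed into the coercivity of $E$. The one point deserving emphasis is that the minimizer lies in the interior of $\Omega$ rather than on its boundary $\{\xi_i=\xi_{i+1}\}$; this is automatic from the proof of Theorem~\ref{th1}, where the compact set $K$ containing $\Omega_c$ was shown to satisfy the strict separation $\xi_{i+1}-\xi_i\ge\delta_1>0$, so the variational problem is genuinely unconstrained at $\bar\xi_0$ and $\nabla E(\bar\xi_0)=0$ is legitimate.
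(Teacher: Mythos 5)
Your proposal is correct and follows essentially the same route as the paper: coercivity from Theorem~\ref{th1} gives a minimizer $\bar\xi_0$ in the open set $\Omega$, Fermat's rule yields $\nabla E(\bar\xi_0)=0$, which is exactly system (\ref{4}), and formula (\ref{3}) with these $\xi_i$ gives the increasing self-similar solution. The extra verification steps you spell out (heat equation in each phase, interface values, limits at $\pm\infty$, monotonicity) are the same routine checks the paper leaves implicit.
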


\section{Convexity of the Function $E$ and Uniqueness of the Solution}

In this section we find a condition of strict convexity of the function $E(\bar\xi)$. Since a strictly convex function can have at most one critical point (and it is necessarily a global minimum) the system (\ref{4}) has at most one solution, that is, a self-similar solution (\ref{3}) of problem (\ref{1}), (\ref{St}), (\ref{2}) is unique. We introduce the functions
$$
E_i(\bar\xi)=-k_i(u_{i+1}-u_i)\ln (F(\xi_{i+1}/a_i)-F(\xi_i/a_i)), \quad i=0,\ldots,n,
$$
which are exactly summands of the first sum in expression (\ref{5}). They depend only on two coordinates $\xi_i,\xi_{i+1}$ if
$i=1,\ldots,n-1$ or on one coordinate $\xi_1$, $\xi_n$ if $i=0,n$, respectively. We compute the second derivatives
$$
\frac{\partial^2 E_i(\bar\xi)}{\partial\xi_i^2}=\kappa_i(u_{i+1}-u_i)\left(\frac{F''(y_i)}{F(x_i)-F(y_i)}+\frac{(F'(y_i))^2}{(F(x_i)-F(y_i))^2}\right),
$$
where we denote $x_i=\xi_{i+1}/a_i$, $y_i=\xi_i/a_i$.
Since $F''(x)=-\frac{x}{2}F'(x)$ we can transform the above relation
\begin{align}\label{13}
\frac{\partial^2 E_i(\bar\xi)}{\partial\xi_i^2}=\frac{\kappa_i(u_{i+1}-u_i)F'(y_i)}{F(x_i)-F(y_i)}
\left(-\frac{y_i}{2}-\frac{F'(x_i)-F'(y_i)}{F(x_i)-F(y_i)}\right)+\nonumber\\ \frac{\kappa_i(u_{i+1}-u_i)F'(y_i)F'(x_i)}{(F(x_i)-F(y_i))^2}=\beta_i^-+\gamma_i,
\end{align}
where
\begin{align*}
\beta_i^-=\frac{\kappa_i(u_{i+1}-u_i)F'(y_i)}{F(x_i)-F(y_i)}
\left(-\frac{y_i}{2}-\frac{F'(x_i)-F'(y_i)}{F(x_i)-F(y_i)}\right), \ i=1,\ldots,n; \\ \gamma_i=\frac{\kappa_i(u_{i+1}-u_i)F'(y_i)F'(x_i)}{(F(x_i)-F(y_i))^2}, \ i=0,\ldots,n.
\end{align*}
Notice that $x_n=+\infty$, $y_0=-\infty$ and therefore $F(x_n)=1$, $F(y_0)=0$, $F'(y_0)=F'(x_n)=0$. In particular,
$\gamma_0=\gamma_n=0$. By the similar computations we find
\begin{equation}\label{14}
\frac{\partial^2 E_i(\bar\xi)}{\partial\xi_{i+1}^2}=\beta_i^++\gamma_i, \quad \frac{\partial^2 E_i(\bar\xi)}{\partial\xi_i\partial\xi_{i+1}}=-\gamma_i,
\end{equation}
$$
\mbox{ where } \ \beta_i^+=\frac{\kappa_i(u_{i+1}-u_i)F'(x_i)}{F(x_i)-F(y_i)}
\left(\frac{x_i}{2}+\frac{F'(x_i)-F'(y_i)}{F(x_i)-F(y_i)}\right), \quad i=0,\ldots,n-1.
$$
To estimate the values $\beta_i^\pm$, we use the following simple lemma.

\begin{lemma}\label{lem2}
For each $x>y$ there exist such $\theta_1,\theta_2\in (y,x)$ that
\begin{align}\label{15a}
\frac{1}{F(x)-F(y)}\left(-\frac{y}{2}-\frac{F'(x)-F'(y)}{F(x)-F(y)}\right)=\frac{1}{4F'(\theta_1)}, \\
\label{15b}
\frac{1}{F(x)-F(y)}\left(\frac{x}{2}+\frac{F'(x)-F'(y)}{F(x)-F(y)}\right)=\frac{1}{4F'(\theta_2)}.
\end{align}
\end{lemma}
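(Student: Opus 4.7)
The plan is to reduce each identity to Cauchy's mean value theorem applied to carefully chosen auxiliary functions, and then eliminate a leftover factor of $F'$ by a second application of the ordinary mean value theorem to $F$. The linchpin is the identity $F''(t)=-\tfrac{t}{2}F'(t)$ that has already been used repeatedly in the paper.

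For (\ref{15a}) I multiply through by $(F(x)-F(y))^2$ and observe that the numerator $-\tfrac{y}{2}(F(x)-F(y))-(F'(x)-F'(y))$ equals $\phi_1(x)-\phi_1(y)$ with $\phi_1(t):=-\tfrac{y}{2}F(t)-F'(t)$; the relation $F''=-\tfrac{t}{2}F'$ then yields the key formula $\phi_1'(t)=\tfrac{t-y}{2}F'(t)$. Taking $\psi(t):=(F(t)-F(y))^2$ for the denominator, so that $\psi'(t)=2(F(t)-F(y))F'(t)$, Cauchy's MVT supplies some $\xi\in(y,x)$ with
$$
\frac{\phi_1(x)-\phi_1(y)}{\psi(x)-\psi(y)}=\frac{\phi_1'(\xi)}{\psi'(\xi)}=\frac{\xi-y}{4(F(\xi)-F(y))},
$$
the common factor $F'(\xi)$ cancelling precisely because of our choice of $\phi_1$. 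A further application of the MVT to $F$ on $[y,\xi]$ produces $\theta_1\in(y,\xi)\subset(y,x)$ with $F(\xi)-F(y)=F'(\theta_1)(\xi-y)$, completing (\ref{15a}).

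Identity (\ref{15b}) is handled by the mirror-image construction: take $\phi_2(t):=\tfrac{x}{2}F(t)+F'(t)$, so that $\phi_2'(t)=\tfrac{x-t}{2}F'(t)$, and $\tilde\psi(t):=(F(x)-F(t))^2$, so that $\tilde\psi'(t)=-2(F(x)-F(t))F'(t)$. Cauchy's MVT produces $\xi\in(y,x)$ for which the ratio equals $\tfrac{x-\xi}{4(F(x)-F(\xi))}$, and applying the MVT to $F$ on $[\xi,x]$ furnishes $\theta_2\in(\xi,x)\subset(y,x)$ with $F(x)-F(\xi)=F'(\theta_2)(x-\xi)$, yielding $\tfrac{1}{4F'(\theta_2)}$.

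The only real subtlety is one of pattern recognition: the shapes of $\beta_i^\pm$ only suggest the right auxiliary functions $\phi_1,\phi_2$ once one sees that adding a $\pm\tfrac{y}{2}F$ or $\mp\tfrac{x}{2}F$ piece to $\mp F'$ makes $\phi'$ factor through $F'$, so that after Cauchy's MVT the $F'(\xi)$ cancels cleanly against the $F'(\xi)$ emerging from $\psi'$. Once this is spotted, everything else is routine.
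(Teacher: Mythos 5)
Your proof is correct, and it rests on the same engine as the paper's: rewrite each side of (\ref{15a}), (\ref{15b}) as a ratio of increments with denominator the squared increment of $F$, apply Cauchy's mean value theorem, and let the identity $F''(t)=-\tfrac{t}{2}F'(t)$ do the cancellation. The difference is in the choice of numerator. The paper takes $g(y)=-\tfrac{y}{2}(F(x)-F(y))-(F'(x)-F'(y))$ as a function of the \emph{lower endpoint} $y$ (so the coefficient $-y/2$ itself varies); then $g'(y)=-(F(x)-F(y))/2$ and $h'(y)=-2(F(x)-F(y))F'(y)$, so a single application of Cauchy's theorem gives the ratio $1/(4F'(\theta_1))$ outright, with $\theta_1$ the Cauchy point. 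You instead freeze the coefficient at $y$, taking $\phi_1(t)=-\tfrac{y}{2}F(t)-F'(t)$, which leaves the residual factor $(\xi-y)/(F(\xi)-F(y))$ after Cauchy's theorem and forces a second, ordinary mean value theorem applied to $F$ on $[y,\xi]$ to produce $\theta_1$. Both arguments are valid (your $\psi'$ and $\tilde\psi'$ are nonvanishing on the open interval, and the sign from $\tilde\psi$ cancels as you implicitly use); the paper's choice is slightly slicker (one mean value theorem instead of two, and (\ref{15b}) obtained from (\ref{15a}) by the reflection $x'=-y$, $y'=-x$, which you replace by an explicit mirror construction), while yours is arguably easier to discover since the auxiliary functions are read off directly from the numerators.
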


\begin{proof}
Observe that the left hand side of (\ref{15a}) can be written in the form
$$
\frac{-\frac{y}{2}(F(x)-F(y))-(F'(x)-F'(y))}{(F(x)-F(y))^2}=\frac{g(x)-g(y)}{h(x)-h(y)},
$$
where
$$
g(y)=-\frac{y}{2}(F(x)-F(y))-(F'(x)-F'(y)), \quad h(y)=(F(x)-F(y))^2.
$$
By the Cauchy mean value theorem there exists such a value $\theta_1\in (y,x)$ that
$$
\frac{g(x)-g(y)}{h(x)-h(y)}=\frac{g'(\theta_1)}{h'(\theta_1)}.
$$
Using again the identity $F''(y)=-y/2F'(y)$, we find that
$$
g'(y)=-(F(x)-F(y))/2+yF'(y)/2+F''(y)=-(F(x)-F(y))/2, \quad h'(y)=-2(F(x)-F(y))F'(y).
$$
Therefore, $g'(\theta_1)/h'(\theta_1)=1/(4F'(\theta_1))$ and (\ref{15a}) follows.
Similarly, the left hand side of (\ref{15b}) can be represented as the ratio $(g_1(x)-g_1(y))/(h(x)-h(y))$, where
$$
g_1(x)=\frac{x}{2}(F(x)-F(y))+(F'(x)-F'(y)), \quad h(x)=(F(x)-F(y))^2.
$$
Applying again the Cauchy mean value theorem, we arrive at (\ref{15b}). Notice also that (\ref{15b}) reduce to (\ref{15a}) after the change $x'=-y$, $y'=-x$.
\end{proof}

\begin{remark}\label{rem1}
As is easy to realize from the proof, relations (\ref{15a}), (\ref{15b}) remain valid, respectively, for $x=+\infty$, $y=-\infty$.
\end{remark}

\begin{corollary}\label{cor1}
There exist values $\theta_{1i},\theta_{2i}\in (y_i,x_i)$ such that
\begin{equation}\label{16}
\beta_i^-=\kappa_i(u_{i+1}-u_i)F'(y_i)/(4F'(\theta_{1i})), \quad \beta_i^+=\kappa_i(u_{i+1}-u_i)F'(x_i)/(4F'(\theta_{2i})).
\end{equation}
In particular, $\beta_i^\pm>0$ and for $i=1,\ldots,n$
\begin{equation}\label{17}
\beta_i^-+\beta_{i-1}^+ >\frac{1}{4}\min (\kappa_i(u_{i+1}-u_i),\kappa_{i-1}(u_i-u_{i-1})).
\end{equation}
\end{corollary}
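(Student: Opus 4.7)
The plan is to read formulas (\ref{16}) directly off Lemma~\ref{lem2} and then establish the strict bound (\ref{17}) by a case split on the sign of the common coordinate $\xi_i$.

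First I would substitute $x=x_i$, $y=y_i$ into (\ref{15a}) and multiply through by $\kappa_i(u_{i+1}-u_i)F'(y_i)$; this converts the defining expression for $\beta_i^-$ into $\kappa_i(u_{i+1}-u_i)F'(y_i)/(4F'(\theta_{1i}))$. The formula for $\beta_i^+$ follows symmetrically from (\ref{15b}) after multiplying by $\kappa_i(u_{i+1}-u_i)F'(x_i)$. The endpoint cases where $y_0=-\infty$ or $x_n=+\infty$ are covered by Remark~\ref{rem1}. Positivity of $\beta_i^\pm$ is then immediate since $F'>0$ on $\R$, $\kappa_i>0$ and $u_{i+1}-u_i>0$.

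For the inequality (\ref{17}) the key observation is that $y_i=\xi_i/a_i$ and $x_{i-1}=\xi_i/a_{i-1}$ share the sign of $\xi_i$, and that $F'(\xi)=\frac{1}{2\sqrt{\pi}}e^{-\xi^2/4}$ is a strictly unimodal even function, strictly decreasing in $|\xi|$. I would split on the sign of $\xi_i$. If $\xi_i\geq 0$, then $y_i\geq 0$ and $\theta_{1i}\in(y_i,x_i)$ satisfies $\theta_{1i}>y_i\geq 0$, so $|\theta_{1i}|>|y_i|$ and $F'(\theta_{1i})<F'(y_i)$, which forces $\beta_i^->\kappa_i(u_{i+1}-u_i)/4$. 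If $\xi_i<0$, then $x_{i-1}<0$ and $\theta_{2,i-1}\in(y_{i-1},x_{i-1})$ satisfies $\theta_{2,i-1}<x_{i-1}<0$, so $|\theta_{2,i-1}|>|x_{i-1}|$ and an analogous argument gives $\beta_{i-1}^+>\kappa_{i-1}(u_i-u_{i-1})/4$. In either case one of the two summands already strictly exceeds $\min(\kappa_i(u_{i+1}-u_i),\kappa_{i-1}(u_i-u_{i-1}))/4$, and the other summand is strictly positive by the first part of the corollary, so the sum strictly exceeds this minimum.

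The main obstacle is recognizing the coupling that makes the two indices work in tandem: the point $\xi_i$ is simultaneously the upper endpoint of the interval $(y_{i-1},x_{i-1})$ on which $\theta_{2,i-1}$ lives and the lower endpoint of $(y_i,x_i)$ on which $\theta_{1i}$ lives, so whichever sign $\xi_i$ has, exactly one of these intervals lies on the side of the origin where the corresponding endpoint of $F'$ dominates $F'(\theta)$. Once this case split is spotted, the rest is a one-line consequence of the strict monotonicity of $F'$ on each half-line.
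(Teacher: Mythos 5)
Your proof is correct and follows essentially the same route as the paper: identities (\ref{16}) are obtained by specializing Lemma~\ref{lem2} (with Remark~\ref{rem1} for the infinite endpoints $y_0=-\infty$, $x_n=+\infty$) and multiplying by $\kappa_i(u_{i+1}-u_i)F'(y_i)$ resp.\ $\kappa_i(u_{i+1}-u_i)F'(x_i)$, and the bound (\ref{17}) is derived by exactly the paper's case split on the sign of $\xi_i$, using that $\theta_{1i}>y_i\ge 0$ gives $F'(y_i)/F'(\theta_{1i})>1$ when $\xi_i\ge 0$ and $\theta_{2(i-1)}<x_{i-1}\le 0$ gives $F'(x_{i-1})/F'(\theta_{2(i-1)})>1$ when $\xi_i<0$. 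No gaps; your explicit remark that $\xi_i$ is simultaneously the lower endpoint of $(y_i,x_i)$ and the upper endpoint of $(y_{i-1},x_{i-1})$ is just a clearer articulation of the mechanism already implicit in the paper's argument.
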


\begin{proof}
Inequalities (\ref{16}) directly follows from Lemma~\ref{lem1} and the definitions of $\beta_i^\pm$.
Further, if $\xi_i\ge 0$ then $\theta_{1i}>y_i=\xi_i/a_i\ge 0$, which implies that $F'(y_i)/F'(\theta_{1i})>1$. Therefore,
$\beta_i^->\kappa_i(u_{i+1}-u_i)/4$ and (\ref{17}) follows. Similarly, in the case $\xi_i\le 0$ we have $\theta_{2(i-1)}<x_{i-1}=\xi_i/a_{i-1}\le 0$ and therefore $F'(x_{i-1})/F'(\theta_{2(i-1)})>1$. Hence
$\beta_{i-1}^+ > \kappa_{i-1}(u_i-u_{i-1})/4$ and (\ref{17}) is again fulfilled.
\end{proof}

Now we are ready to formulate a sufficient condition for convexity of the function $E$.

\begin{theorem}\label{th3}
Suppose that for all $i=1,\ldots,n$
\begin{equation}\label{18}
\min (\kappa_i(u_{i+1}-u_i),\kappa_{i-1}(u_i-u_{i-1}))+2d_i\ge 0.
\end{equation}
Then the function $E(\bar\xi)$ is strictly convex on $\Omega$.
\end{theorem}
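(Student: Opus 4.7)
The plan is to show that the Hessian $\nabla^2 E(\bar\xi)$ is positive definite at every point $\bar\xi\in\Omega$, which gives strict convexity. Since $E=\sum_{i=0}^n E_i+\sum_{i=1}^n d_i\xi_i^2/4$ and each $E_i$ depends only on the adjacent pair $(\xi_i,\xi_{i+1})$ (with $E_0$ and $E_n$ depending on the single variables $\xi_1$ and $\xi_n$ respectively), the Hessian is tridiagonal, and formulas (\ref{13}), (\ref{14}) express all its entries through $\beta_i^\pm$ and $\gamma_i$. I would then compute the quadratic form
\[
Q(v)\doteq\langle\nabla^2 E(\bar\xi)\,v,v\rangle,\qquad v=(v_1,\ldots,v_n)\in\R^n,
\]
by summing the contributions of the individual $E_i$. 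Each interior $E_i$ ($1\le i\le n-1$) contributes
\[
(\beta_i^-+\gamma_i)v_i^2+(\beta_i^++\gamma_i)v_{i+1}^2-2\gamma_i v_i v_{i+1}=\beta_i^- v_i^2+\beta_i^+ v_{i+1}^2+\gamma_i(v_i-v_{i+1})^2,
\]
while the endpoint pieces $E_0,E_n$ contribute $\beta_0^+ v_1^2$ and $\beta_n^- v_n^2$ respectively, using $\gamma_0=\gamma_n=0$.

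Regrouping the sum by $v_j^2$ and adding the quadratic part $\sum_j d_j v_j^2/2$ of $E$, I expect the clean identity
\[
Q(v)=\sum_{j=1}^n\bigl(\beta_{j-1}^++\beta_j^-+d_j/2\bigr)v_j^2+\sum_{i=1}^{n-1}\gamma_i(v_i-v_{i+1})^2.
\]
The second sum is non-negative since $\gamma_i>0$. For the diagonal coefficients I would invoke the \emph{strict} inequality (\ref{17}) of Corollary~\ref{cor1} together with hypothesis (\ref{18}):
\[
\beta_{j-1}^++\beta_j^-+d_j/2>\tfrac{1}{4}\bigl[\min(\kappa_j(u_{j+1}-u_j),\kappa_{j-1}(u_j-u_{j-1}))+2d_j\bigr]\ge 0,
\]
so every coefficient of $v_j^2$ is strictly positive, forcing $Q(v)>0$ whenever $v\ne 0$.

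The main obstacle is the careful bookkeeping at the endpoints $i=0$ and $i=n$: since $\beta_0^-$ and $\beta_n^+$ are never defined, one has to verify that they can consistently be set to $0$ and that the vanishing of $\gamma_0,\gamma_n$ makes the contributions from all $n+1$ terms $E_i$ assemble correctly into the identity above. The conceptually crucial point, however, is that Corollary~\ref{cor1} already supplies a \emph{strict} inequality for $\beta_{j-1}^++\beta_j^-$, so the non-strict hypothesis (\ref{18}) is enough to obtain strict positivity of the diagonal coefficients, and therefore strict convexity of $E$ throughout $\Omega$.
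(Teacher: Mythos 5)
Your proof is correct, and it reaches the paper's conclusion by a slightly more direct route than the paper itself. The identity
\[
\langle D^2E(\bar\xi)\,v,v\rangle=\sum_{j=1}^n\Bigl(\beta_{j-1}^{+}+\beta_j^{-}+\tfrac{d_j}{2}\Bigr)v_j^2+\sum_{i=1}^{n-1}\gamma_i\,(v_i-v_{i+1})^2
\]
does follow directly from (\ref{13}), (\ref{14}) together with $\gamma_0=\gamma_n=0$, and your worry about the endpoints is in fact vacuous: the coefficient of $v_j^2$ only ever involves $\beta_{j-1}^{+}$ (with $j-1\in\{0,\dots,n-1\}$) and $\beta_j^{-}$ (with $j\in\{1,\dots,n\}$), all of which are defined, so no convention about ``$\beta_0^-$'' or ``$\beta_n^+$'' is needed. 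Your estimate $\beta_{j-1}^{+}+\beta_j^{-}+d_j/2>\frac14[\min(\kappa_j(u_{j+1}-u_j),\kappa_{j-1}(u_j-u_{j-1}))+2d_j]\ge0$ is exactly the combination of (\ref{17}) and (\ref{18}), and $\gamma_i\ge0$ handles the off-diagonal terms, so the Hessian is positive definite at every point of the convex open set $\Omega$, hence $E$ is strictly convex. The paper proves the same positivity differently in its bookkeeping: it fixes $\bar\xi_0$, splits each $d_i=d_i^-+d_i^+$ in a point-dependent way (possible precisely because $\beta_i^-+\beta_{i-1}^++d_i/2>0$, the same key inequality you use), writes $E=\sum P_i$ with each $P_i$ having a positive semidefinite full Hessian (and positive definite two-variable Hessian for the interior pieces), and then shows that a null vector of the total quadratic form must annihilate each piece, forcing it to vanish. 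Your global quadratic-form identity dispenses with the auxiliary splitting and the functions $P_i$, which shortens the argument; what the paper's packaging buys is the explicit exhibition of $E$ as a sum of convex ``local'' energies, but for the theorem itself both routes rest on the same ingredients ((\ref{13}), (\ref{14}), Corollary~\ref{cor1}, hypothesis (\ref{18})).
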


\begin{proof}
We fix a point $\bar\xi_0=(\xi_{01},\ldots,\xi_{0n})\in\Omega$ and define values $d_i^-$, $d_i^+$ such that $d_i^-+d_i^+=d_i$ and
that
\begin{equation}\label{18a}
\beta_i^-+d_i^-/2>0, \quad \beta_{i-1}^+ +d_i^+/2>0,
\end{equation}
where $\beta_i^\pm$ correspond to $\bar\xi_0$. Since, in view of (\ref{17}), (\ref{18})
$$
\beta_i^-+\beta_{i-1}^++d_i/2>0,
$$
such values really exist, for instance we can take $$d_i^-=\frac{d_i\beta_i^-}{\beta_i^-+\beta_{i-1}^+}, \quad d_i^+=\frac{d_i\beta_{i-1}^+}{\beta_i^-+\beta_{i-1}^+}.$$
We introduce the functions
\begin{align*}
P_i(\bar\xi)=E_i(\xi_i,\xi_{i+1})+d_i^-\xi_i^2/4+d_{i+1}^+\xi_{i+1}^2/4, \ i=1,\ldots,n-1; \\
P_0(\bar\xi)=E_0(\xi_1)+d_1^+\xi_1^2/4, \quad P_n(\bar\xi)=E_n(\xi_n)+d_n^-\xi_n^2/4.
\end{align*}
Observe that
$$
\sum_{i=0}^n P_i(\bar\xi)=\sum_{i=0}^n E_i(\bar\xi)+\sum_{i=1}^n (d_i^-+d_i^+)\xi_i^2/4=\sum_{i=0}^n E_i(\bar\xi)+\sum_{i=1}^n d_i\xi_i^2/4=E(\bar\xi).
$$
First, we observe that by relations (\ref{13}), (\ref{14}), (\ref{18a})
\begin{equation}\label{19}
\frac{\partial^2 P_0(\bar\xi_0)}{\partial\xi_1^2}=P_0''(\xi_{01})=\beta_0^++d_1^+/2>0, \quad \frac{\partial^2 P_n(\bar\xi_0)}{\partial\xi_n^2}=P_n''(\xi_{0n})=\beta_n^-+d_n/2>0.
\end{equation}
For $i=1,\ldots,n-1$ the function $P_i$ depend only on two variables $\xi_i,\xi_{i+1}$, $P_i(\bar\xi)=P_i(\xi_i,\xi_{i+1})$. Denote by $D_2^2 P_i$ the Hessian of $P_i$ as a function of two variables. In view of (\ref{13}), (\ref{14}) we find
$$
\frac{\partial^2 P_i(\bar\xi_0)}{\partial\xi_i^2}=\beta_i^-+d_i^-/2+\gamma_i, \ \frac{\partial^2 P_i(\bar\xi_0)}{\partial\xi_{i+1}^2}=\beta_i^++d_{i+1}^+/2+\gamma_i, \ \frac{\partial^2 P_i(\bar\xi_0)}{\partial\xi_i\partial\xi_{i+1}}=-\gamma_i,
$$
and the matrix $D_2^2 P_i$ can be represented as the sum $R_1+\gamma_iR_2$, where $R_1$ is a diagonal matrix with the positive (in view of (\ref{18a})) diagonal elements
$\beta_i^-+d_i^-/2$, $\beta_i^++d_{i+1}^+/2$ while $R_2=\left(\begin{smallmatrix} 1 & -1 \\ -1 & 1\end{smallmatrix}\right)$. Since $R_1>0$, $R_2\ge 0$, then the matrix $D_2^2 P_i>0$ (strictly positive definite).
This, together with (\ref{19}), implies that the ``full`` Hessian $D^2 P_i(\bar\xi_0)\ge 0$ for all $i=0,\ldots,n$ and therefore
$$
D^2 E(\bar\xi_0)=\sum_{i=0}^n D^2P_i(\bar\xi_0)\ge 0.
$$
Let us prove that $D^2 E(\bar\xi_0)>0$. Assume that for some $\zeta=(\zeta_1,\ldots,\zeta_n)\in\R^n$
\begin{equation}\label{form}
D^2 E(\bar\xi_0)\zeta\cdot\zeta=\sum_{i,j=1}^n\frac{\partial^2 E(\bar\xi_0)}{\partial\xi_i\partial\xi_j}\zeta_i\zeta_j=0.
\end{equation}
Since $D^2 E(\bar\xi_0)$ is a sum of nonnegative matrices $D^2P_i(\bar\xi_0)$, we find that
\begin{equation}\label{20}
D^2 P_i(\bar\xi_0)\zeta\cdot\zeta=0 \quad \forall i=0,\ldots,n.
\end{equation}
Taking $i=0,n$ we derive with the help of (\ref{19}) that $\zeta_1=\zeta_n=0$. In the case $i=1,\ldots,n-1$
(\ref{20}) reduces to the equality
$$
D_2^2 P_i(\bar\xi_0)v_i\cdot v_i=0, \quad v_i\doteq (\zeta_i,\zeta_{i+1})\in\R^2.
$$
Since $D_2^2 P_i(\bar\xi_0)>0$, this implies that $v_i=0$. In particular, $\zeta_i=0$ and we claim that $\zeta=0$.
Thus, (\ref{20}) may hold only for $\zeta=0$, that is, the Hessian $D^2 E(\bar\xi_0)>0$. Taking into account that a point
$\bar\xi_0\in\Omega$ is arbitrary, we conclude that the function $E(\bar\xi)$ is strictly convex.
\end{proof}
\begin{theorem}\label{th4}
Under requirement (\ref{18}) there exists a unique increasing self-similar solution (\ref{3}) of problem (\ref{1}), (\ref{St}), (\ref{2}).
\end{theorem}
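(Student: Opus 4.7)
The plan is to combine the existence result of Theorem~\ref{th2} with the strict convexity of $E$ established in Theorem~\ref{th3}. The hypothesis (\ref{18}) is, \emph{a priori}, stronger than the coercivity conditions (\ref{coer1}), (\ref{coer2}) required by Theorem~\ref{th2}, so my first task is to verify the implication $(\ref{18}) \Rightarrow (\ref{coer1}), (\ref{coer2})$.

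This should reduce to a pointwise argument. Condition (\ref{18}) splits into the two bounds $\kappa_{i-1}(u_i-u_{i-1}) + 2d_i \geq 0$ and $\kappa_i(u_{i+1}-u_i) + 2d_i \geq 0$, equivalently $d_i \geq -\tfrac{1}{2}\kappa_{i-1}(u_i-u_{i-1})$ and $d_i \geq -\tfrac{1}{2}\kappa_i(u_{i+1}-u_i)$ for each $i = 1,\ldots,n$. Substituting the first bound into the $i$-th summand of $S^j$ gives $\kappa_{i-1}(u_i-u_{i-1}) + d_i \geq \tfrac{1}{2}\kappa_{i-1}(u_i-u_{i-1}) \geq 0$, so in fact $S^j \geq 0$ \emph{termwise}; symmetrically each summand of $S_j$ is also nonnegative. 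Thus (\ref{coer1}) and (\ref{coer2}) both hold, and Theorem~\ref{th2} produces at least one increasing self-similar solution.

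For uniqueness I would invoke Theorem~\ref{th3}: the function $E$ is strictly convex on the convex open domain $\Omega$, so it admits at most one critical point. The system (\ref{4}) is precisely $\nabla E(\bar\xi) = 0$, and every increasing self-similar solution of the form (\ref{3}) corresponds via the Stefan conditions (\ref{St}) to such a critical point. Hence there is at most one admissible $\bar\xi_0 \in \Omega$, and combined with the existence above we obtain exactly one; formula (\ref{3}) then yields the unique self-similar solution.

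I do not see any genuine obstacle in this final theorem, since all of the analytic work has already been absorbed into Theorems \ref{th2} and \ref{th3}. Theorem~\ref{th4} is essentially a packaging statement: check that (\ref{18}) implies the coercivity conditions, then exploit strict convexity to upgrade existence to existence-and-uniqueness. The only point worth a sentence in the write-up is the bijective correspondence between critical points $\bar\xi_0 \in \Omega$ of $E$ and increasing self-similar solutions of (\ref{1}), (\ref{St}), (\ref{2}), which is built in by the construction of $E$ from the Stefan conditions.
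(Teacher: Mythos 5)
Your proposal is correct and follows essentially the same route as the paper: the termwise estimate showing each summand $\kappa_{i-1}(u_i-u_{i-1})+d_i$ and $\kappa_i(u_{i+1}-u_i)+d_i$ is positive under (\ref{18}) is exactly the paper's verification of (\ref{coer1}), (\ref{coer2}), and uniqueness via strict convexity of $E$ (Theorem~\ref{th3}) and the identification of system (\ref{4}) with $\nabla E=0$ is likewise the paper's argument.
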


\begin{proof}
Obviously, condition (\ref{18}) implies that
\begin{align*}
\kappa_i(u_{i+1}-u_i)+d_i=\kappa_i(u_{i+1}-u_i)/2+(\kappa_i(u_{i+1}-u_i)+2d_i)/2>0, \\ \kappa_{i-1}(u_i-u_{i-1})+d_i=\kappa_{i-1}(u_i-u_{i-1})/2+(\kappa_{i-1}(u_i-u_{i-1})+2d_i)/2>0
\end{align*}
and, in particular, conditions (\ref{coer1}), (\ref{coer2}) are satisfied. By Theorem~\ref{th2} solution (\ref{3}) of problem (\ref{1}), (\ref{St}), (\ref{2}) exists. Uniqueness of this solution follows from strict convexity of the function $E(\bar\xi)$ claimed in Theorem~\ref{th3}.
\end{proof}

\section*{Acknowledgments}
The research was supported by the Russian Science Foundation, grant 22-21-00344.


\begin{thebibliography}{999}

\fontsize{10.6pt}{4mm}{\selectfont

\vskip4pt
\bibitem{LSU} O. A. Ladyzhenskaya, V. A. Solonnikov and N. N. Ural'tseva,
\textit{Linear and Quasi-Linear Equations of Parabolic Type}, AMS, Providence (1968).

\vskip4pt
\bibitem{Pukh}
V. V. Pukhnachov, Generation of a singularity in the solution of a Stefan-type problem [in Russian], \textit{Differ. Uravn.}, \textbf{16}, No. 3, 492--500 (1980).

}
\end{thebibliography}
\end{document}